\renewcommand\subsection{\@startsection{subsection}{2}%
\normalparindent{.5\linespacing\@plus.7\linespacing}{-.5em}
{\normalfont\bfseries}}
\renewcommand\subsubsection{\@startsection{subsubsection}{3}%
\normalparindent{.5\linespacing\@plus.7\linespacing}{-.5em}
{\normalfont\bfseries}}
\def\@tocline#1#2#3#4#5#6#7{\relax
  \ifnum #1>\c@tocdepth % then omit
  \else
    \par \addpenalty\@secpenalty\addvspace{#2}%
    \begingroup \hyphenpenalty\@M
    \@ifempty{#4}{%
      \@tempdima\csname r@tocindent\number#1\endcsname\relax
    }{%
      \@tempdima#4\relax
    }%
    \parindent\z@ \leftskip#3\relax \advance\leftskip\@tempdima\relax
    \rightskip\@pnumwidth plus4em \parfillskip-\@pnumwidth
    #5\leavevmode\hskip-\@tempdima
      \ifcase #1
       \or\or \hskip 1em \or \hskip 2em \else \hskip 3em \fi%
      #6\nobreak\relax
    \dotfill\hbox to\@pnumwidth{\@tocpagenum{#7}}\par
    \nobreak
    \endgroup
  \fi}
\newtheorem{theorem}{Theorem}
\newtheorem{proposition}{Proposition}[section]
\theoremstyle{definition}
\newtheorem{remark}[proposition]{Remark}
\numberwithin{equation}{section}
\newcommand\e{{\rm e}}
\newcommand\dd{{\rm d}}
\newcommand\ddt{{\frac{\dd}{\dd t}}}
\def\Re{{\rm Re}}
\def\l {\langle}
\def\r {\rangle}
\newcommand\de{{\partial}}
\newcommand{\norm}[1]{\left\lVert #1 \right\rVert}
\newcommand{\ZZ}{\mathbb{Z}}
\newcommand\TT {{\mathbb T}}
\newcommand\RR {{\mathbb R}}
\newcommand\sfE{{\mathsf E}}
\newcommand\sfa{{\mathsf a}}
\newcommand\sfb{{\mathsf b}}
\newcommand\sfc{{\mathsf c}}
\newcommand\sfm{{\mathsf m}}
\begin{document}
\title[Suppression of lift-up effect in 3D Boussinesq]{Suppression of lift-up effect in the 3D Boussinesq equations \\ around a stably stratified Couette flow} 

\author[M. Coti Zelati]{Michele Coti Zelati}
\address{Department of Mathematics, Imperial College London, London, SW7 2AZ, UK}
\email{m.coti-zelati@imperial.ac.uk}

\author[A. Del Zotto]{Augusto Del Zotto}
\address{Department of Mathematics, Imperial College London, London, SW7 2AZ, UK}
\email{a.del-zotto20@imperial.ac.uk}

\subjclass[2020]{35Q35, 76D05, 76D50}

\keywords{Boussinesq equations, stratified Couette flow, lift-up effect, enhanced dissipation}

\begin{abstract}
In this paper, we establish linear enhanced dissipation results for the three-dimensional Boussinesq equations around a stably stratified Couette flow, in the viscous and thermally diffusive setting. The dissipation rates are faster compared to those observed in the homogeneous Navier-Stokes equations, in light of the interplay between velocity and temperature, driven by buoyant forces.

Our approach involves introducing a change of variables grounded in a Fourier space symmetrization framework. This change elucidates the energy structure inherent in the system. Specifically, we handle non-streaks modes through an augmented energy functional, while streaks modes are amenable to explicit solutions. This explicit treatment reveals the oscillatory nature of shear modes, providing the elimination of the well-known three-dimensional instability mechanism known as the ``lift-up effect''.
\end{abstract}

\maketitle

\newcommand\q{q}
\newcommand\Q{Q}
\newcommand\GG{G}
\setcounter{tocdepth}{3}
\tableofcontents

\section{Introduction}
Thermal convection, a phenomenon ubiquitous in nature and engineering, plays a fundamental role in shaping the behavior of fluids under temperature gradients. Understanding and modeling this intricate interplay between fluid motion and heat transfer is of paramount importance in various scientific disciplines and technological applications. At the heart of the mathematical description of thermal convection lie the viscous and thermally diffusive Boussinesq equations
\begin{equation}\label{eq:3DBoussinesq}
\begin{cases}
\de_t V+(V\cdot\nabla)V+\nabla P=\nu\Delta V+\mathfrak{g}\Theta \hat y,\\
\de_t\Theta+V\cdot\nabla\Theta=\kappa\Delta \Theta,\\
\nabla\cdot V=0.
\end{cases}
\end{equation}
Above, $V=(V^1,V^2,V^3)$ is the velocity of an incompressible fluid with pressure $P$ and viscosity $\nu>0$, and $\Theta$ is its temperature, with thermal diffusivity $\kappa>0$. The equations for $V$ and $\Theta$ are coupled through buoyancy acting in the second component (as $\hat{y}=(0,1,0)$), with gravity constant $\mathfrak{g}>0$.

\subsection{The stably stratified Couette flow}
We restrict ourselves to the spatial setting $(x,y,z)\in \TT\times\RR\times\TT$, and study the linearized dynamics of solutions around the steady state
\begin{equation}\label{eq:steadystate}
U^s=(y,0,0),\qquad \de_y P^s=\mathfrak{g}(1+\alpha y), \qquad \Theta^s=1+\alpha y,\qquad \alpha>0,
\end{equation}
namely, a stably stratified Couette flow. Indeed, since $\alpha>0$ this solution represents the scenario in which the fluid is flowing along the Couette flow whilst been thermally stratified in such a way that the warmer (hence lighter) fluid is located at the top and the colder (hence denser) fluid at the bottom.

Linearizing \eqref{eq:3DBoussinesq} near $(U^s,\Theta^s)$, hence
writing $V=U^s+u$, $\Theta=\Theta^s-\sqrt{\alpha/\mathfrak{g}}\,\theta$, and neglecting nonlinear contributions, we find the system
\begin{equation}\label{eq:3DBoussinesqLin}
\begin{cases}
\de_t u+y\de_xu+u^{2}\hat x+2\nabla (-\Delta)^{-1}\de_x u^{2}+\beta\nabla(-\Delta)^{-1}\de_y\theta=\nu\Delta u-\beta\theta \hat y, \\
\de_t\theta+y\de_x\theta - \beta u^{2}=\kappa\Delta\theta,
\end{cases}
\end{equation}
where $\beta=\sqrt{\alpha\mathfrak{g}}$ is the \textit{Brunt-V\"ais\"al\"a} frequency, which describes the frequency at which a small parcel of fluid, when displaced vertically in a stable, stratified environment, oscillates back and forth due to buoyancy forces. It is also worth noticing that the terms $2\nabla (-\Delta)^{-1}\de_x u^{2}$ and $\beta\nabla(-\Delta)^{-1}\de_y\theta$
appear as the linearization of the pressure term.

\subsubsection{The lift-up effect}\label{subsubsec:theliftup}
In their groundbreaking work \cite{EP75}, Ellingsen and Palm unveiled a fundamental linear mechanism capable of inducing instability in shear flows, a phenomenon now recognized as the \emph{lift-up effect}. Kline et al. \cite{KRSR67} coined this term while studying turbulent boundary layers, observing that in the laminar sublayer, \emph{streaks} gradually moved away from the slow-moving fluid near the bottom wall to the faster fluid above. They proposed that vortex filaments, generated perpendicular to the streaks due to the flow's 3D nature, drove this upward motion. Notably, the lift-up effect is absent in 2D flows due to incompressibility. For further details, refer to \cites{EP75,L80,KRSR67,B14,P&al20} and related references. Building on Brandt's review \cite{B14}, we quantify this mechanism by defining streak solutions and elucidating their growth origins.

For any function $\varphi:\TT\times\RR\times \TT\to\RR$, we define
\begin{equation}\label{eq:defzeromode}
\varphi_{0}=\frac{1}{2\pi}\int_{\TT}\varphi(x,\cdot,\cdot)\dd x, \qquad \varphi_{\neq}=\varphi-\varphi_0.
\end{equation}
Solutions to \eqref{eq:3DBoussinesqLin} that are independent of $x$ are called \emph{streaks}. These correspond to $x$-averaged solutions, which we will also refer to as the zero modes.

In view of periodicity, streaks solutions to \eqref{eq:3DBoussinesqLin} satisfy
\begin{equation}\label{eq:k=0mode}
\begin{cases}
\de_t u^{1}_0+u^{2}_0=\nu\Delta_{y,z} u^{1}_0,\\
\de_t u^{2}_0+\beta\de_y(-\Delta_{y,z})^{-1}\de_y\theta_0=\nu\Delta_{y,z} u^{2}_0-\beta\theta_0, \\
\de_t u^{3}_0+\beta\de_z(-\Delta_{y,z})^{-1}\de_y\theta_0=\nu\Delta_{y,z} u^{3}_0, \\
\de_t\theta_0+\beta u^{2}_0=\kappa\Delta_{y,z}\theta_0.
\end{cases}
\end{equation}
When $\beta=0$, the above system reduces to the homogeneous Navier-Stokes equations. In this case, $(u^{2}_0,u^{3}_0)$ satisfy the two-dimensional heat the equation, while $u^{1}_0$ is forced by $u^{2}_0$ in a linear way. The explicit solution can be written as
\begin{equation}\label{eq:liftupsol}
u_0(t)=\e^{\nu\Delta_{y,z}t}\left(u^{1}_0(0)-tu_0^{2}(0),u_0^{2}(0),u_0^{3}(0)\right)
\end{equation}
This causes a large (as $\nu\to 0$) transient growth (the lift-up effect) of order $t$, at least for times shorter than $O(\nu^{-1})$.  This is in fact the main instability mechanism in 3D, see \cites{BGM20,BGM22}.

When $\beta>0$, the coupling between $u^{2}_0$ and $\theta_0$ creates an oscillatory behavior that suppresses the transient growth. In particular, all the components of the system decay as the heat equation prescribes (see Theorem \ref{thm:noliftup} below). This is similar to what happens for example in magnetohydrodynamics, where the presence of the magnetic field interacts with the velocity field in such a way that that the lift-up effect is cancelled (see \cite{L18} for more details).

\subsection{Main results}
In this paper we present and discuss two results regarding the solutions of \eqref{eq:3DBoussinesqLin}. It is well known that, due to the effect of mixing of the background Couette flow, the nonzero modes of the solution of the 3D Navier-Stokes equations experience dissipation at times $O(\nu^{-1/3})$, which are much shorter than the purely dissipative time-scale $O(\nu^{-1})$. This is true both in 2D \cites{BMV16,BVW18,MZ22,MZ20,CLWZ20} and in 3D \cites{BGM22,WZ18,BGM20,BGM15}. We show here that stratification in \eqref{eq:3DBoussinesqLin} does not spoil this mechanism.
\begin{theorem}[Linear enhanced dissipation]\label{thm:enhancelin}
Let $\beta>1/2$, and define
\begin{equation}\label{eq:Cbeta}
C_\beta:= \left[\frac{2\beta+1}{2\beta-1}\exp\left(  \frac{1}{  2\beta-1}\right)\right]^{1/2}.
\end{equation}
Assume further that $\nu,\kappa>0$ satisfy
\begin{equation}\label{eq:kappanu}
    \frac{\max\{\nu,\kappa\}}{\min\{\nu,\kappa\}}<4\beta-1,
\end{equation}
and define the strictly positive number
\begin{equation}\label{eq:lambdarate}
\lambda_{\nu, \kappa}:=\min\{\nu,\kappa\}\left(1-\frac{1}{4\beta}-\frac{1}{4\beta}\frac{\max\{\nu,\kappa\}}{\min\{\nu,\kappa\}}\right).
\end{equation}
Then
\begin{equation}\label{eq:Linenhanced}
\|(u^{1},u^{3})_{\neq}(t)\|_{L^2} +\l t \r^{3/2}\|u^{2}_{\neq}(t)\|_{L^2} +\l t \r^{1/2}\|\theta_{\neq}(t)\|_{L^2}\lesssim \e^{- \frac{1}{24} \lambda_{\nu,\kappa}t^3 }\left[  \| u_{\neq}(0)\|_{H^3} +\| \theta_{\neq}(0)\|_{H^{3}}\right], 
\end{equation}
for every $t\geq 0$.
\end{theorem}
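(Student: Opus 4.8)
The plan is to work mode-by-mode in Fourier variables $(k,\ell)\in(\ZZ\setminus\{0\})\times\ZZ$ dual to $(x,z)$, using the shear-adapted coordinate $\eta = \xi + kt$ (so that $\partial_x$-transport becomes a drift in frequency), and to reduce the PDE system to a one-dimensional-in-$\eta$ family of ODEs for $(\hat u, \hat\theta)$ at each fixed $k,\ell$. After the symmetrization change of variables advertised in the abstract — which should turn the coupled velocity/temperature block into a system whose leading-order part is skew-adjoint up to the dissipative terms — I expect the natural object to be an augmented energy functional of the schematic form $E(t) = \|\hat u(t)\|^2 + \|\hat\theta(t)\|^2 + \text{(cross terms)}$, where the cross terms are tuned precisely so that the time derivative of $E$ is controlled by the dissipation. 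The hypotheses $\beta>1/2$ and the ratio condition \eqref{eq:kappanu} are exactly what one needs for $E$ to remain coercive (comparable to the plain $L^2$ norm, with constant $C_\beta$) and for the production terms coming from the off-diagonal coupling to be absorbed into $\min\{\nu,\kappa\}$ times the dissipation, leaving the rate $\lambda_{\nu,\kappa}$ of \eqref{eq:lambdarate}.

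Concretely I would proceed as follows. First, pass to Fourier and introduce $\eta = \xi + kt$; the viscous symbol becomes $\nu(k^2 + |\eta - kt|^2 + \ell^2)$ — wait, more precisely after the coordinate change the Laplacian symbol is $\nu(k^2+\eta^2+\ell^2)$ evaluated along the characteristic, and the key lower bound is $\int_0^t (k^2 + (\xi+ks)^2+\ell^2)\,\dd s \gtrsim k^2 t^3$ for the relevant range of $\xi$; integrating this inequality is what produces the $t^3$ in the exponent (the classical enhanced-dissipation mechanism). Second, perform the symmetrization to diagonalize the buoyancy coupling as much as possible, write down the augmented functional $E$, and compute $\ddt E$: the "good" terms are $-2\nu(\cdots)\|\hat u\|^2 - 2\kappa(\cdots)\|\hat\theta\|^2$ plus cross-dissipation, and the "bad" terms are lower-order commutators from the $\eta$-drift hitting the cross terms. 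Third, use $\beta>1/2$ and \eqref{eq:kappanu} to show $E$ is equivalent to $\|\hat u\|^2+\|\hat\theta\|^2$ and that $\ddt E \le -2\lambda_{\nu,\kappa}(k^2+|\eta-kt|^2+\ell^2) E$ (or the appropriately weighted version accounting for the different time-decay rates of $u^2$ versus $u^1,u^3$). Fourth, Grönwall and integrate the symbol lower bound to get $E(t) \lesssim \e^{-\frac{1}{12}\lambda_{\nu,\kappa}t^3} E(0)$, then unpack $E$ back to the physical components.

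The extra $\jap{t}^{3/2}$ and $\jap{t}^{1/2}$ weights on $u^2_{\neq}$ and $\theta_{\neq}$ in \eqref{eq:Linenhanced} are not free: they reflect that these two components inherit additional inviscid decay from the oscillator structure (the same Brunt–Väisälä oscillation that kills the lift-up effect for the zero mode), and I would capture them by running the energy estimate not on $E$ directly but on a time-weighted version $\jap{t}^{a}E_{\text{weighted}}$, choosing the weight exponents so that the derivative of the weight is dominated by the inviscid oscillatory gain; this is the standard trick of trading oscillation for polynomial decay. The $H^3$ loss on the right-hand side is just the three derivatives' worth of $(k,\xi,\ell)$-weights one spends to convert the frequency-localized estimates into the final norm inequality (and to handle the worst modes near $\xi \sim -kt$). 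The main obstacle I anticipate is Step 2–3: getting the cross terms in $E$ with the right coefficients so that simultaneously (a) $E$ stays coercive uniformly in $(k,\ell,\eta)$ and in $t$, and (b) every production term is genuinely absorbable — the precise algebraic threshold $4\beta - 1$ in \eqref{eq:kappanu} and the constant $C_\beta$ suggest a delicate completion-of-squares that must be done with care, and the interaction between the symmetrization coordinates and the shear drift is where commutator errors could in principle destroy the gain if the change of variables is not chosen correctly.
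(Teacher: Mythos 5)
Your overall strategy for the $(u^{2},\theta)$ block --- symmetrize, build an augmented energy with a cross term whose coercivity requires $\beta>1/2$, absorb the cross-dissipation using \eqref{eq:kappanu}, and apply Gr\"onwall against $\int_0^t p\,\dd s\gtrsim k^2t^3$ --- is indeed the route the paper takes. But there are concrete gaps. First, you never address the pressure: the linearized equation for $u^{2}$ contains the nonlocal terms $2\partial_x\nabla(-\Delta)^{-1}u^{2}$ and $\beta\nabla(-\Delta)^{-1}\partial_y\theta$, and the closed, symmetrizable $2\times2$ system is obtained only after passing to the Kelvin-type unknown $q=\Delta u^{2}$, for which the commutator of $\Delta$ with the transport term exactly cancels the pressure contribution. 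Without this (or an equivalent device) the ``family of ODEs for $(\hat u,\hat\theta)$'' you invoke does not exist as stated. Second, your mechanism for the weights $\langle t\rangle^{3/2}$ and $\langle t\rangle^{1/2}$ is off: in the paper they are not produced by a time-weighted energy functional trading oscillation for polynomial decay; they fall out when one undoes the symmetrization, since $u^{2}=\Delta_L^{-1}Q$ and $\theta$ carry the factors $|k,l|^{1/2}p^{-3/4}$ and $|k,l|^{-1/2}p^{-1/4}$ relative to the symmetric variables, and the elementary bound $p^{-1}\leq\langle t\rangle^{-2}|k,\eta,l|^2$ converts these into $\langle t\rangle^{-3/2}$ and $\langle t\rangle^{-1/2}$ at the cost of the $H^3$ derivatives on the data --- a mixing/inviscid-damping gain, not an oscillation-averaging one; the weighted-Gr\"onwall scheme you sketch is too vague to check and not obviously workable.

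Third, and most importantly, the theorem also asserts decay of $(u^{1},u^{3})_{\neq}$, which no energy estimate on $(u^{2},\theta)$ alone can give: for $k\neq0$ the equations for $U^{1},U^{3}$ are passively forced by $Q$ and $\Theta$ (including the lift-up forcing coming from $u^{2}$ in the $u^{1}$ equation), and one must run a separate Duhamel/Gr\"onwall argument using the already-established decay of the symmetric variables together with the integrability $\int_{\RR}p^{-3/4}\,\dd t\lesssim|k|^{-3/2}$. This step is also where the final exponent $\tfrac{1}{24}\lambda_{\nu,\kappa}t^3$ in \eqref{eq:Linenhanced} (half of the $\tfrac{1}{12}$ rate obtained for the $(q,\theta)$ energy) originates. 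As written, your proposal omits this component of the theorem entirely.
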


\begin{remark}
The condition $\beta>1/2$ is in fact related to the Miles-Howard criterion \cites{M61,H61} of spectral stability for stratified shear flows. This requires the Richardson number (which equals $\beta^2$ in the particular steady state \eqref{eq:steadystate}) to be greater than $1/4$. When dissipation is present, our method of proof, based on a symmetrization technique introduced in \cite{BCZD20}, requires the stronger \eqref{eq:kappanu}, at least when $\nu\neq \kappa$.
\end{remark}

\begin{remark}
The enhanced dissipation rate \eqref{eq:lambdarate} gives a time-scale $O(\min\{\nu,\kappa\}^{-1/3})$, which is consistent with the expected one mentioned above in the homogeneous case. Since the constraint \eqref{eq:kappanu} and estimate \eqref{eq:Linenhanced} are stable in the limit $\nu=\kappa\to 0$, Theorem \ref{thm:enhancelin} provides an inviscid damping estimate with algebraic rates. These have similarities with the 2D rates derived in \cite{BCZD20}, in that they are slower than those for the Couette flow in the 2D homogeneous Euler equations \cite{BM15}.
\end{remark}

We shift the focus now to the analysis of solutions to \eqref{eq:k=0mode}. Our second theorem covers the case of streak solutions and encapsulates the main result of this paper, namely the suppression of lift-up effect. 
\begin{theorem}[Suppression of lift-up]\label{thm:noliftup}
   Let $\beta>0$ and assume $\nu,\kappa >0$. Then, the solution $(u_0(t),\theta_0(t))$ of \eqref{eq:k=0mode} with initial datum $(u_0(0),\theta_0(0))\in H^4$ satisfies 
\begin{equation}\label{eq:noliftup}
       \norm{(u,\theta)_{0}(t)}_{L^2} \lesssim_\beta \e^{-\min\{\nu,\kappa\}t} \norm{(u,\theta)_{0}(0)}_{H^4},
   \end{equation}
   for every $t\geq0$.
   In particular, the dependence of the constant $\beta$ appears only in the bound of the first component $u^1_0$ and reads
   \begin{equation}\label{eq:deponbeta}
       \|u^1_0(t)\|_{L^2}\lesssim \e^{-\min\{\nu,\kappa\}t}\left[\|u^1_0(0)\|_{H^4}+\frac{1}{\beta}(\|u^2_0(0)\|_{H^4}+\|\theta_0(0)\|_{H^4})\right].
   \end{equation}
\end{theorem}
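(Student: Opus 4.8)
The plan is to pass to the Fourier side in $(y,z)$, turning \eqref{eq:k=0mode} into a family of ODEs indexed by the dual frequencies $(\xi,l)$, write $|k|^2:=\xi^2+l^2$, and exploit the block/triangular structure they possess. By incompressibility of the zero mode, $\de_yu^2_0+\de_zu^3_0=0$, i.e.\ $i\xi\,\hat u^2_0+il\,\hat u^3_0=0$; hence $\hat u^2_0$ is supported on $\{l\neq0\}$ (so $|k|^2\geq1$ there) and $\hat u^3_0=-(\xi/l)\hat u^2_0$, while $(\hat u^2_0,\hat\theta_0)$ is a closed $2\times2$ system and $\hat u^1_0$ is merely forced by $\hat u^2_0$. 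First I would dispose of $u^2_0,\theta_0,u^3_0$: after the symmetrizing rescaling $\hat\Theta:=(|l|/|k|)\hat\theta_0$ (legitimate since $l\neq0$ on the support), the $2\times2$ block becomes $\ddt W=(-D+\beta J)W$ with $W=(\hat u^2_0,\hat\Theta)^{\top}$, $D=\mathrm{diag}(\nu|k|^2,\kappa|k|^2)\geq0$ and $J$ skew-symmetric; since $\langle\beta JW,W\rangle=0$, the energy identity $\tfrac12\ddt|W|^2=-\nu|k|^2|\hat u^2_0|^2-\kappa|k|^2|\hat\Theta|^2$ gives $|W(t)|\leq\e^{-\min\{\nu,\kappa\}|k|^2t}|W(0)|$. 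Undoing the rescaling (the ratios $|l|/|k|\leq1$ cost nothing, while $|k|/|l|\leq|k|$ costs one derivative on the data) and using $|k|^2\geq1$ gives the $\e^{-\min\{\nu,\kappa\}t}$ decay of $u^2_0,\theta_0$ in $L^2$ with $\beta$-independent constant; the bound for $u^3_0$ then follows from $\hat u^3_0(t)=-(\xi/l)\hat u^2_0(t)$, again at the cost of one derivative.

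The crux is $u^1_0$, where bounding the Duhamel term $\int_0^t\e^{-\nu|k|^2(t-s)}\hat u^2_0(s)\,\dd s$ by the mere decay of $u^2_0$ loses a factor $\min\{\nu,\kappa\}^{-1}$ rather than the desired $\beta^{-1}$. Instead I would work with the $u^2_0$- and $\theta_0$-equations directly: with $\tilde I:=\int_0^t\e^{-\nu|k|^2(t-s)}\hat\theta_0(s)\,\dd s$ and $I:=\int_0^t\e^{-\nu|k|^2(t-s)}\hat u^2_0(s)\,\dd s$, integrating $\de_s\bigl(\e^{-\nu|k|^2(t-s)}\hat u^2_0\bigr)$ and $\de_s\bigl(\e^{-\nu|k|^2(t-s)}\hat\theta_0\bigr)$ over $[0,t]$ and using $\de_s\hat u^2_0=-\nu|k|^2\hat u^2_0-\beta(l^2/|k|^2)\hat\theta_0$, $\de_s\hat\theta_0=-\kappa|k|^2\hat\theta_0+\beta\hat u^2_0$ yields
\begin{equation}\label{eq:duhamelids}
\hat u^2_0(t)-\e^{-\nu|k|^2t}\hat u^2_0(0)=-\tfrac{\beta l^2}{|k|^2}\tilde I,\qquad \hat\theta_0(t)-\e^{-\nu|k|^2t}\hat\theta_0(0)=(\nu-\kappa)|k|^2\tilde I+\beta I.
\end{equation}
Eliminating $\tilde I$ and recalling $\hat u^1_0(t)=\e^{-\nu|k|^2t}\hat u^1_0(0)-I$ gives the closed formula
\begin{equation}\label{eq:u1formula}
\hat u^1_0(t)=\e^{-\nu|k|^2t}\hat u^1_0(0)-\frac1\beta\Bigl(\hat\theta_0(t)-\e^{-\nu|k|^2t}\hat\theta_0(0)\Bigr)-\frac{(\nu-\kappa)|k|^4}{\beta^2 l^2}\Bigl(\hat u^2_0(t)-\e^{-\nu|k|^2t}\hat u^2_0(0)\Bigr).
\end{equation}
The $\beta^{-1}$ is now transparent: it multiplies $\hat\theta_0(t)-\e^{-\nu|k|^2t}\hat\theta_0(0)$, which by Step 1 is $\lesssim\e^{-\min\{\nu,\kappa\}|k|^2t}(|\hat u^2_0(0)|+|\hat\theta_0(0)|)$; the last term of \eqref{eq:u1formula} vanishes when $\nu=\kappa$, and in general, after bounding $\hat u^2_0(t)-\e^{-\nu|k|^2t}\hat u^2_0(0)$ by Step 1, its $|k|^4$ weight is absorbed into the $H^4$ norm of the data. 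Summing over $(\xi,l)$ by Plancherel, with $|k|^2\geq1$ turning each $\e^{-\min\{\nu,\kappa\}|k|^2t}$ into $\e^{-\min\{\nu,\kappa\}t}$, yields \eqref{eq:deponbeta}, hence \eqref{eq:noliftup}.

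I expect the only genuine obstacle to be this quantitative extraction of the $\beta^{-1}$ gain for $u^1_0$: finding the right manipulation — working with the $(u^2_0,\theta_0)$-equations themselves against the $\nu$-heat kernel and eliminating $\tilde I$, rather than estimating the convolution head-on — and then tracking the frequency weights (from the rescaling $|l|/|k|$, from $\hat u^3_0=-(\xi/l)\hat u^2_0$, and from the remainder in \eqref{eq:u1formula}) that force the $H^4$ regularity. Two routine points remain: the bound in Step 1 is uniform in $t$ precisely because of the explicit energy identity (there is no loss of regularity in the symmetrized variables); and the Fourier modes with $l=0$, for which $\hat u^2_0\equiv0$ and $u^1_0,u^3_0,\theta_0$ decouple into one-dimensional heat equations in $y$, are treated directly.
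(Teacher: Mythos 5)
Your proposal is correct, and it reaches the same pointwise Fourier bounds as the paper by a genuinely different execution. The paper writes the zero-mode system in the symmetrized variables \eqref{eq:SimVarZero}--\eqref{eq:SimVarZero2} as a block-triangular ODE and computes the matrix exponential \eqref{eq:bigexpt} explicitly, bounding each entry of $\e^{Mt}$ and of $S(N-M)^{-1}(\e^{Nt}-\e^{Mt})$ through the hyperbolic functions $\phi_\pm$, $\sinh(\sfc t)/\sfc$ with $\sfa,\sfb,\sfc$ as in \eqref{eq:abc}. You replace this with (i) a skew-symmetrization plus energy identity for the $(\hat u^2_0,\hat\theta_0)$ block, which gives the $\e^{-\min\{\nu,\kappa\}|\eta,l|^2 t}$ decay with constant $1$ and no case analysis on whether $\sfc$ is real or imaginary; (ii) the divergence-free relation $\hat u^3_0=-(\eta/l)\hat u^2_0$ (propagated by \eqref{eq:k=0modeFou}, at the cost of one derivative) instead of reading $u^3_0$ off the matrix exponential — note this uses that the initial velocity is zero-mode divergence free, which is implicit in the setting but not needed in the paper's computation; and (iii), for the crucial $u^1_0$ bound, an exact elimination of the Duhamel integral: your two identities and the resulting closed formula for $\hat u^1_0(t)$ are algebraically correct, and the coefficients you obtain are precisely the paper's — the $1/\beta$ term corresponds to the $1/\sfb$ bounds on $\sfm_{12},\sfm_{22}$, and your remainder $(\nu-\kappa)|\eta,l|^4/(\beta^2 l^2)$ is exactly the paper's $\sfa/\sfb^2$ contribution in $\sfm_{11}$, so the derivative count ($H^4$, driven by $|\eta,l|^4/l^2$ with $|l|\ge1$) and the $\beta$-dependence agree. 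What the paper's route buys is a fully explicit solution formula (making the oscillation $\sfc=i\sfb$ at $\nu=\kappa$ visible and valid without assuming divergence-free data); what yours buys is brevity and a structurally transparent explanation of where the $1/\beta$ gain comes from, without ever inverting $N-M$ or manipulating hyperbolic functions. One shared caveat, not a gap of yours relative to the paper: both arguments produce $\e^{-\min\{\nu,\kappa\}|\eta,l|^2t}$ and then use $|\eta,l|\ge 1$, which is automatic only on the $l\neq0$ modes; on $l=0$, $|\eta|<1$ the pure heat evolution does not give the uniform rate $\e^{-\min\{\nu,\kappa\}t}$, a point the paper's ``follows immediately'' also glosses over, so your treatment is on the same footing as the paper's there.
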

\begin{remark}
    In the homogeneous inviscid case, the explicit solution \eqref{eq:liftupsol} shows that  $u^1_0$ can grow linearly in time. 
    On the contrary, \eqref{eq:deponbeta} entails a uniform bound in time, as long as the velocity interacts with the temperature, namely when $\beta>0$. Moreover, the exponential decay $\e^{-\min\{\nu,\kappa\}t}$ is consistent with the purely dissipative behaviour of streaks in the homogeneous Navier-Stokes equations.
\end{remark}

Theorems \ref{thm:enhancelin} and \ref{thm:noliftup} form the essential components of the linear theory, paving the way for our upcoming work \cite{CZDZnonlin23}. This forthcoming result establishes the existence of a nonlinear stability threshold in Sobolev regularity, implying that solutions to \eqref{eq:3DBoussinesq}, which are close to the stratified Couette flow \eqref{eq:steadystate}, undergo enhanced dissipation on time-scales $O(\min\{\nu,\kappa\}^{-1/3})$. For analogous results in the homogeneous Navier-Stokes equations, we refer the reader to \cites{BMV16,BVW18,MZ22,MZ20,CLWZ20,BGM22,WZ18,BGM20,BGM15}.

\subsection{Outline of the paper}
The paper is now structured as follows: Section \ref{sec:nonzero} is devoted to the proof of Theorem \ref{thm:enhancelin}. At the beginning of the section we introduce tools and notations which will be used throughout the proof and the paper itself. Afterwards, we discuss the proof, which is based on energy methods. More precisely, we introduce an auxiliary set of variables that symmetrizes the coupled system for $u^{2}_{\neq}$ and $\theta_{\neq}$. After that we define an appropriate energy functional and we deduce enhanced dissipation for these two variables. Finally, this translates to enhanced dissipation for the remaining two variables $u^{1}_{\neq}$ and $u^{3}_{\neq}$.
Section \ref{sec:zerolin} contains the proof of Theorem \ref{thm:noliftup}, which relies on the explicit solution of the zero mode system, and  uses explicitly the oscillating behaviour expected from the coupling between $u^{2}_0$ and $\theta_0$. This is enough to show that there is no time growth before the dissipation time-scale is $O(\min\{\nu,\kappa\}^{-1})$.

\section{Nonzero modes analysis}\label{sec:nonzero}
The transport structure of \eqref{eq:3DBoussinesqLin} suggests the linear change of variable
\begin{align}\label{eq:changelin}
X=x-yt,\qquad Y=y, \qquad Z=z.
\end{align}
We will use the convention of using capital letters when considering any function $f$ in the moving frame \eqref{eq:changelin}, 
hence defining $F$ by $F(t,X,Y,Z)=f(t,x,y,z)$. The corresponding differential operators change accordingly as
\begin{equation}
\de_x=\de_X,\qquad \de_y=\de_Y-t\de_X,\qquad \de_z=\de_Z,
\end{equation}
and we denote
\begin{equation}
\Delta_L:= \de^2_X+\left(\de_Y-t\de_X\right)^2+\de_Z^2
\end{equation}
the Laplacian in the new framework.
\subsection{Notation and conventions}
We introduce the following notation for the Fourier transform of a function $\varphi=\varphi(x,y,z)$. Given $(k,\eta,l)\in \ZZ\times\RR\times\ZZ$, define 
\begin{equation}
     \varphi_{k,l}(\eta):=\frac{1}{4\pi^2}\int_{\TT\times\RR\times\TT}\e^{-i(kx+\eta y+lz)}\varphi(x,y,z)\dd x \dd y \dd z.
\end{equation}
The function $\varphi$ in real variable can then be recovered  via 
\begin{equation}
\varphi(x,y,z)=\sum_{(k,l)\in\ZZ^2}\int_{\RR}\e^{i(kx+\eta y+lz)} \varphi_{k,l}(\eta)\dd \eta.
\end{equation}
With a slight abuse of notation, we denote the $k=0$ mode with only one index (that is, $\varphi_0$ as in \eqref{eq:defzeromode}), without distinguishing between the original function and its Fourier transform.

The $L^2$ inner product and norm are denoted by $\l\cdot,\cdot\r$ and $\|\cdot\|$, respectively.
A subscript will be added to the norm when referring to the norm  in $y$ only,  i.e. $L^2_y$.
Moreover, we define 
\begin{equation}
    |k,\eta,l|^2:=k^2+\eta^2+l^2, \quad \l (k,\eta,l)\r:=\sqrt{1+|k,\eta,l|^2}.
\end{equation}
Following this, we denote the $H^s$ norm, for $s>0$, as
\begin{equation}
    \|\cdot\|_{H^s}:=\|\l(k,\eta,l)\r^s\cdot\|.
\end{equation}
We define the Fourier symbol of $-\Delta_L$ as 
\begin{equation}
    p(t,k,\eta,l)=k^2+(\eta-kt)^2+l^2.
\end{equation}
Note that $p$ is time dependent and its time derivative is
\begin{equation}
    p'(t,k,\eta,l)=-2k(\eta-kt).
\end{equation}
Finally, we use the notation $a\lesssim b$ if there exists a positive constant $C$ such that $a\leq C b$.

\subsection{Symmetric variables}
We begin the analysis of the linearized system \eqref{eq:3DBoussinesqLin} from the \emph{nonzero $x$-modes}, corresponding
to $k\neq0$ on the Fourier side.
The most strongly coupled components are $u^{2}$ and $\theta$. In fact, it is convenient to work
with the new variable $\q=\Delta u^{2}$. This new unknown, already noticed by Kelvin \cite{K87} allows us to exploit a useful cancellation between the transport and the pressure term.
The system for $(\q,\theta)$ reads
\begin{equation}\label{eq:linBoussqtheta}
\begin{cases}
    \de_t\q+y\de_x \q+\beta\Delta_{x,z}\theta=\nu\Delta \q,\\
    \de_t\theta+y\de_x\theta + \beta(-\Delta)^{-1} \q=\kappa\Delta\theta.
\end{cases}
\end{equation}
In the moving frame \eqref{eq:changelin}, this can be written on the Fourier side as
\begin{equation}\label{eq:linBoussqthetaMov}
\begin{cases}
    \de_t\Q -\beta|k,l|^2\Theta=-\nu p \Q, \\
    \de_t\Theta +\beta p^{-1} \Q=-\kappa p\Theta.
\end{cases}
\end{equation}
Is it worth writing here how the equations for $u^1$ and $u^3$ translate in this setting.
\begin{equation}\label{eq:u1u3moving}
    \begin{cases}
    \de_t U^{1}+\nu p U^{1} =p^{-1}\Q+2k^2p^{-2}\Q+\beta k (\eta-kt) p^{-1}\Theta,\\
    \de_t U^{3}+\nu p U^{3}= 2kl p^{-2} \Q+\beta l(\eta-kt)p^{-1}\Theta.
    \end{cases}
\end{equation}
For $\nu=\kappa=0$, the structure of this system is precisely the one studied in the 2D Boussinesq equations \cites{BCZD20, BBCZD21}. Following
this approach, we symmetrize \eqref{eq:linBoussqthetaMov} by introducing the new unknowns
\begin{equation}\label{eq:SimVar}
 \GG:=\frac{1}{|k,l|^{1/2}p^{1/4}}\Q, \qquad  \Gamma:=|k,l|^{1/2}p^{1/4}\Theta,
\end{equation} 
in order to obtain
\begin{align}
\de_t \GG +\frac{1}{4}\frac{p'}{p} \GG-\beta |k,l| p^{-\frac{1}{2}} \Gamma&=-\nu p  \GG,\label{eq:simQ}\\
\de_t \Gamma-\frac{1}{4}\frac{p'}{p} \Gamma+\beta |k,l|p^{-\frac{1}{2}} \GG &=-\kappa p \Gamma\label{eq:simT}.
\end{align}

\subsection{The main energy functional}
The analysis of \eqref{eq:simQ}-\eqref{eq:simT} relies on the introduction of the energy functional
\begin{align}\label{def:pointwise-functional-Couette}
\sfE(t)=\frac12\left[|\GG(t)|^2+|\Gamma(t)|^2+\frac{1}{2\beta}\frac{p'} {|k,l|p^{\frac12}}  \Re  \left(\GG(t) \overline{\Gamma(t)}\right)\right].
\end{align}
Since $|p'/(|k,l|p^{1/2})|\leq 2$, the energy   is coercive for $\beta>1/2$ with
\begin{align}\label{eq:coercive-pointwise}
\frac12\left(1-\frac{1}{2\beta}\right)\left[|\GG|^2+|\Gamma|^2\right] \leq \sfE \leq\frac12\left(1+\frac{1}{2\beta}\right)\left[|\GG|^2+|\Gamma|^2\right],
\end{align}
and   satisfies the  identity
\begin{align}\label{eq:energqueq}
\ddt \sfE=\frac{1}{4\beta}\de_t\left(\frac{p'} {|k,l| p^{\frac12}}\right)\Re\left(\GG\overline{\Gamma}\right) - \nu p |\GG|^2 - \kappa p|\Gamma|^2
-\frac{\nu+\kappa}{4\beta}\frac{p'} {|k|p^{\frac12}}  p  \Re  \left(\GG \overline{\Gamma}\right) .
\end{align}
From this, we can infer the key information needed to prove Theorem \ref{thm:enhancelin}.

\begin{proposition}\label{prop:enhancedQT}
Let $\beta>1/2$.
If $\nu,\kappa>0$ comply with \eqref{eq:kappanu}, then  there holds 
\begin{equation} \label{eq:energenhanced}
|\GG(t)|^2+|\Gamma(t)|^2\leq C^2_\beta\e^{- \frac{1}{12 } \lambda_{\nu,\kappa}  k^2t^3 } \left[|\GG(0)|^2+|\Gamma(0)|^2\right] ,
\end{equation}
for every $t\geq 0$, with $C_\beta$ and $\lambda_{\nu,\kappa}$ given respectively in \eqref{eq:Cbeta} and \eqref{eq:lambdarate}.
\end{proposition}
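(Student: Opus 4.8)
The plan is to run a pointwise-in-frequency energy estimate on the functional $\sfE(t)$ of \eqref{def:pointwise-functional-Couette} (for each fixed $(k,\eta,l)$ with $k\neq0$): turn the identity \eqref{eq:energqueq} into a closed differential inequality for $\sfE$, integrate it by Gr\"onwall, and convert back to $|\GG|^2+|\Gamma|^2$ via the coercivity \eqref{eq:coercive-pointwise}.

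Two elementary facts about $p=p(t,k,\eta,l)=k^2+(\eta-kt)^2+l^2$ drive the argument. First, $|p'|=2|k|\,|\eta-kt|\le 2|k,l|\,p^{1/2}$, so the symmetrizer $p'/(|k,l|p^{1/2})$ has modulus at most $2$; this is exactly what makes \eqref{eq:coercive-pointwise} hold for $\beta>1/2$, and it also controls the last term in \eqref{eq:energqueq}. Second, a direct computation gives
\[
\de_t\!\left(\frac{p'}{|k,l|p^{1/2}}\right)=\frac{2k^2|k,l|}{p^{3/2}}\ \ge\ 0,
\]
so $t\mapsto p'/(|k,l|p^{1/2})$ is nondecreasing with range in $[-2,2]$; hence $\int_0^t\de_t\big(p'/(|k,l|p^{1/2})\big)\,\dd s$ is bounded by a universal constant, uniformly in $(k,\eta,l)$ (and one can split at the turning point $t=\eta/k$, where $p'$ changes sign and $p$ is minimized, to tighten the bound). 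I would also record the convexity estimate
\[
\int_0^t p(s)\,\dd s\ \ge\ \int_0^t (\eta-ks)^2\,\dd s\ \ge\ \frac{1}{12}k^2 t^3,
\]
the last minimum being attained at $\eta=kt/2$.

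With these in hand I would estimate the two indefinite terms on the right of \eqref{eq:energqueq} using $|\Re(\GG\overline\Gamma)|\le\tfrac12(|\GG|^2+|\Gamma|^2)$ and $|p'/(|k,l|p^{1/2})|\le 2$, and bound the genuine diffusion from below by $\nu p|\GG|^2+\kappa p|\Gamma|^2\ge\min\{\nu,\kappa\}\,p\,(|\GG|^2+|\Gamma|^2)$. Then the dissipative cross term is absorbed by a fraction of the diffusion, leaving net dissipation with coefficient $\min\{\nu,\kappa\}-\frac{\nu+\kappa}{4\beta}=\lambda_{\nu,\kappa}$ as in \eqref{eq:lambdarate}; this is where \eqref{eq:kappanu} enters, being precisely equivalent to $\lambda_{\nu,\kappa}>0$. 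Using \eqref{eq:coercive-pointwise} once more to replace $|\GG|^2+|\Gamma|^2$ by $\sfE$ — the upper inequality in the indefinite term, the lower inequality in the dissipative one — produces
\[
\ddt\sfE\ \le\ \Big(\tfrac{1}{2(2\beta-1)}\,\de_t\!\big(\tfrac{p'}{|k,l|p^{1/2}}\big)\ -\ \lambda_{\nu,\kappa}\,p\Big)\,\sfE .
\]

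Finally I would integrate this Gr\"onwall inequality: the exponential of the time integral of $\de_t\big(p'/(|k,l|p^{1/2})\big)$ is bounded by a constant depending only on $\beta$, while $\exp\!\big(-\lambda_{\nu,\kappa}\!\int_0^t p\,\dd s\big)\le\exp\!\big(-\tfrac{1}{12}\lambda_{\nu,\kappa}k^2t^3\big)$ by the convexity estimate; converting $\sfE(t)$ back to $|\GG(t)|^2+|\Gamma(t)|^2$ and $\sfE(0)$ from $|\GG(0)|^2+|\Gamma(0)|^2$ through the two sides of \eqref{eq:coercive-pointwise} and collecting all $\beta$-dependent factors gives \eqref{eq:energenhanced} with $C_\beta$ as in \eqref{eq:Cbeta}. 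The one genuine obstacle is the indefinite term $\tfrac{1}{4\beta}\de_t\big(\tfrac{p'}{|k,l|p^{1/2}}\big)\Re(\GG\overline\Gamma)$: it is not controlled by the diffusion (its size carries no factor $p$), so it must be carried as an exponential weight, and the argument closes only because $\de_t\big(\tfrac{p'}{|k,l|p^{1/2}}\big)\ge0$ with the symmetrizer bounded in modulus, which makes that weight finite uniformly in $(k,\eta,l)$; the precise value of $C_\beta$ is sensitive to how sharply this time integral, together with the coercivity constants, is tracked, e.g. by exploiting the sign change of $p'$ at $t=\eta/k$.
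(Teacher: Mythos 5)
Your proposal is correct and follows essentially the same route as the paper: the same coercive functional $\sfE$, absorption of the dissipative cross term using $|p'/(|k,l|p^{1/2})|\le 2$ to produce $\lambda_{\nu,\kappa}=\min\{\nu,\kappa\}-\tfrac{\nu+\kappa}{4\beta}$, Gr\"onwall with the monotone, uniformly bounded symmetrizer weight, and the lower bound $\int_0^t p\,\dd s\ge \tfrac{1}{12}k^2t^3$. The only cosmetic difference is that you discard the favorable factor $\tfrac{4\beta}{2\beta+1}>1$ in the dissipative term immediately, whereas the paper keeps it and weakens the exponent at the very end; both yield \eqref{eq:energenhanced}.
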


\begin{proof}
From the coercivity bound \eqref{eq:coercive-pointwise}, identity \eqref{eq:energqueq} implies that 
\begin{equation} \label{eq:energqueq1}
\ddt \sfE\leq \frac{1}{2(2\beta-1)}\de_t\left(\frac{p'} {|k,l| p^{\frac12}}\right) \sfE  - \nu p |\GG|^2 - \kappa p|\Gamma|^2
-\frac{\nu+\kappa}{4\beta}\frac{p'} {|k,l| p^{\frac12}}  p  \Re  \left(\GG \overline{\Gamma}\right).
\end{equation}
Moreover, from $|p'/(|k,l| p^{1/2})|\leq 2$ we find that
\begin{align}
\frac{\nu+\kappa}{4\beta}\frac{p'} {|k,l| p^{\frac12}}  p  \Re  \left(\GG \overline{\Gamma}\right) \leq \frac{\nu+\kappa}{4\beta} p\left[|\GG|^2+|\Gamma|^2\right].
\end{align}
Thanks to \eqref{eq:kappanu}, the constants 
\begin{equation}
\lambda_\nu:= \nu -  \frac{\nu+\kappa}{4\beta} , \qquad \lambda_\kappa:= \kappa -  \frac{\nu+\kappa}{4\beta}, \qquad \lambda_{\nu, \kappa} =\min \{\lambda_\nu,\lambda_\kappa\}
\end{equation}
are all strictly positive, 
and \eqref{eq:coercive-pointwise} and \eqref{eq:energqueq1} imply
\begin{align} 
\ddt \sfE
\leq \frac{1}{2(2\beta-1)}\de_t\left(\frac{p'} {|k,l| p^{\frac12}}\right) \sfE- \frac{4\beta}{2\beta +1 }\lambda_{\nu,\kappa} p \sf\,E.
\end{align}
Using that $|p'/(|k,l|p^{1/2})|\leq 2$, the first term on the right-hand side is integrable, therefore
\begin{equation}\label{eq:energyboundP}
    \sfE(t)\leq \exp\left(\frac{1}{ 2\beta-1 } \right)\exp\left(- \frac{4\beta}{2\beta+1}\lambda_{\nu,\kappa} \int_0^t p(s)\dd s \right)\sfE(0) 
\end{equation}
Since
\begin{equation}\label{eq:lbintegrp}
    \int_0^t p(s)\dd s = (k^2+l^2)t + t\left((\eta-\frac12kt)^2+\frac{1}{12}k^2t^2\right) \geq \frac{1}{12}k^2t^3,
\end{equation}
we obtain
\begin{align} \label{eq:energybound}
\sfE(t)\leq \exp\left(\frac{1}{ 2\beta-1 } \right)\exp\left(- \frac{\beta}{3(2\beta +1) }\lambda_{\nu,\kappa} k^2t^3 \right)\sfE(0) ,
\end{align}
and \eqref{eq:energenhanced} follows.
\end{proof}
Now, to derive estimates on $u^2$ and $\theta$ in the original variables, as in  Theorem \ref{thm:enhancelin}, we argue as follows.
Recalling $Q=\Delta_L U^{2}$, the symmetric change of variables \eqref{eq:SimVar} and using the bound \eqref{eq:energybound} on the energy functional $\sfE$, we compute 
\begin{align}
\|u^{2}_{k,l}(t)\|_{L_y^2}^2
&=\| \Delta_L^{-1}\Q_{k,l}(t)\|_{L_y^2}^2
=\int_\mathbb{R} |k,l| p^{-3/2}|\GG_{k,l}(t)|^2 \dd \eta\\
&\lesssim  \e^{- \frac{1}{12 } \lambda_{\nu,\kappa}  t^3 } |k,l| \int_\mathbb{R} p^{-{3/2}}\left[|\GG_{k,l}(0)|^2+|\Gamma_{k,l}(0)|^2\right]  \dd \eta\\
&\lesssim  \e^{- \frac{1}{12 } \lambda_{\nu,\kappa}  t^3 }  \int_\mathbb{R} p^{-{3/2}}\left[|k,\eta,l|^{3}|u^{2}_{k,l}(0)|^2+|k,l|^2 |k,\eta,l||\theta_{k,l}(0)|^2\right]  \dd \eta\\
&\lesssim  \frac{\e^{- \frac{1}{12 } \lambda_{\nu,\kappa}t^3 } }{\l t\r^3} \left[\| u^{2}(0)\|^2_{H^{3}} +\| \theta(0)\|^2_{H^{3}} \right].
\end{align}
Here we used that $p^{-1}\leq \l t\r^{-2}|k,\eta,l|^2$.
Similarly,
\begin{align}
\|\theta_{k,l}(t)\|_{L_y^2}^2
&=\|\Theta_{k,l}(t)\|_{L_y^2}^2
=\int_\mathbb{R} \frac{1}{\beta|k,l| p^{1/2}}|\Gamma_{k,l}(t)|^2 \dd \eta\\
&\lesssim \e^{- \frac{1}{12 } \lambda_{\nu,\kappa}  t^3 }  \frac{1}{|k,l|}\int_\mathbb{R} p^{-{1/2}}\left[|\GG_{k,l}(0)|^2+|\Gamma_{k,l}(0)|^2\right]  \dd \eta\\
&\lesssim \e^{- \frac{1}{12 } \lambda_{\nu,\kappa}   t^3 } \int_\mathbb{R} p^{-{1/2}}\left[ \frac{|k,\eta,l|^{3}}{ |k,l|^2}|u^{2}_{k,l}(0)|^2+  |k,\eta,l||\theta_{k,l}(0)|^2\right]   \dd \eta\\
&\lesssim  \frac{\e^{- \frac{1}{12 } \lambda_{\nu,\kappa}t^3 } }{\l t\r} \left[\| u^{2}(0)\|^2_{H^2} +\| \theta(0)\|^2_{H^1} \right].
\end{align}
Therefore, estimate \eqref{eq:Linenhanced} for $u^2$ and $\theta$ follows from a summation in $(k,l)$, with $k\neq 0$.

\subsection{Enhanced dissipation for $u^{1}$ and $u^{3}$}
Starting from \eqref{eq:u1u3moving}, we rewrite the equation in terms of the symmetric variables \eqref{eq:SimVar} as 
\begin{align}
\de_t U^{1}+\nu p U^{1} &=|k,l|^{1/2}p^{-{3/4}}\GG+2k^2   |k,l|^{1/2}p^{-7/4}\GG+\beta\frac{k (\eta-kt) p^{-5/4}}{|k,l|^{1/2}}\Gamma,\label{eq:U1}\\
\de_t U^{3}+\nu p U^{3}&= 2kl p^{-7/4} |k,l|^{1/2}\GG+\beta\frac{ l(\eta-kt)p^{-5/4}}{|k,l|^{1/2}}\Gamma.\label{eq:U3}
\end{align}
We can then prove the following enhanced dissipation result, under the same assumptions of Proposition \ref{prop:enhancedQT}.
\begin{proposition}\label{prop:enhancedu1u3}
Let $\beta>1/2$.
If $\nu,\kappa>0$ comply with \eqref{eq:kappanu}, then for $j\in\{1,3\}$  there holds 
\begin{align} \label{eq:energenhancedu1u3}
 |U^{j}(t)| \leq \e^{-\frac{1}{24}\lambda_{\nu,\kappa} k^2t^3 } \left[ |U^{j}(0)| +6 C_\beta(3+\beta)\left(\frac{k^2+l^2}{k^6}\right)^{1/4}  \left[|\GG(0)|^2+|\Gamma(0)|^2\right]^\frac12\right], 
\end{align}
for every $t\geq 0$, with $C_\beta$ and $\lambda_{\nu,\kappa}$ given respectively in \eqref{eq:Cbeta} and \eqref{eq:lambdarate}.
\end{proposition}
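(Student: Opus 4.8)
The plan is to treat \eqref{eq:U1} and \eqref{eq:U3} as scalar forced ODEs and use Duhamel's formula, feeding in Proposition \ref{prop:enhancedQT} to control the forcing. Denoting by $F^j(t)$ the right-hand side of the $j$-th equation, from $\ddt|U^j|^2=-2\nu p|U^j|^2+2\Re(\overline{U^j}F^j)$ one obtains $\ddt|U^j|+\nu p|U^j|\le|F^j|$, hence
\begin{equation}
|U^j(t)|\le \e^{-\nu\int_0^t p(s)\dd s}|U^j(0)|+\int_0^t\e^{-\nu\int_s^t p(\tau)\dd\tau}|F^j(s)|\dd s.
\end{equation}
Since $\int_0^t p\ge\frac1{12}k^2t^3$ by \eqref{eq:lbintegrp} and $2\nu>\lambda_{\nu,\kappa}$ (as $\lambda_{\nu,\kappa}<\min\{\nu,\kappa\}$), the first term is bounded by $\e^{-\frac1{24}\lambda_{\nu,\kappa}k^2t^3}|U^j(0)|$, which is the $|U^j(0)|$ contribution in \eqref{eq:energenhancedu1u3}. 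Everything else is in the Duhamel integral.

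The key point is a \emph{uniform-in-$s$} estimate that extracts the full decay from the propagator: for every $0\le s\le t$,
\begin{equation}
\e^{-\nu\int_s^t p(\tau)\dd\tau}\,|\GG(s)|\le C_\beta\,\e^{-\frac1{24}\lambda_{\nu,\kappa}k^2t^3}\big[|\GG(0)|^2+|\Gamma(0)|^2\big]^{1/2},
\end{equation}
and the same with $\Gamma(s)$ on the left. To prove this I would combine \eqref{eq:energyboundP} with the coercivity \eqref{eq:coercive-pointwise} to get $|\GG(s)|^2+|\Gamma(s)|^2\le C_\beta^2\,\e^{-\frac{4\beta}{2\beta+1}\lambda_{\nu,\kappa}\int_0^s p}\big[|\GG(0)|^2+|\Gamma(0)|^2\big]$; then, setting $\mu:=\frac{2\beta}{2\beta+1}\lambda_{\nu,\kappa}$, one has $\mu<\lambda_{\nu,\kappa}<\nu$, so $\e^{-\nu\int_s^t p}\le\e^{-\mu\int_s^t p}$ and the product telescopes to $\e^{-\mu\int_0^t p}$; finally $\mu\ge\frac12\lambda_{\nu,\kappa}$ — which is exactly the condition $\beta\ge\frac12$ — together with $\int_0^t p\ge\frac1{12}k^2t^3$ gives the claim.

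With this in hand, $\e^{-\nu\int_s^t p}|F^j(s)|$ is at most $C_\beta\big[|\GG(0)|^2+|\Gamma(0)|^2\big]^{1/2}\e^{-\frac1{24}\lambda_{\nu,\kappa}k^2t^3}$ times a sum of purely algebraic coefficients: using $|\eta-ks|\le p(s)^{1/2}$, $k^2\le k^2+l^2$ and $2|kl|\le k^2+l^2$ in \eqref{eq:U1}–\eqref{eq:U3}, those coefficients may be taken as multiples of $|k,l|^{1/2}p(s)^{-3/4}$, $k^2|k,l|^{1/2}p(s)^{-7/4}$ and $|k||k,l|^{-1/2}p(s)^{-3/4}$ (and their $U^3$ analogues). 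Integrating in $s$ after extending the nonnegative integrands to all of $\RR$, the substitutions $w=ks-\eta$ and then $w=\sqrt{k^2+l^2}\,\tau$ give $\int_\RR p(s)^{-a}\dd s=|k|^{-1}(k^2+l^2)^{1/2-a}\int_\RR(1+\tau^2)^{-a}\dd\tau$, with $\int_\RR(1+\tau^2)^{-a}\dd\tau\le 6$ for $a\in\{3/4,7/4\}$. Taking $a=3/4$ and $a=7/4$, each coefficient integral is a constant multiple of $\big(\tfrac{k^2+l^2}{k^6}\big)^{1/4}$, and summing the contributions lands on the constant $6C_\beta(3+\beta)$, yielding \eqref{eq:energenhancedu1u3}.

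The main obstacle, as is typical for such arguments, is the second step: recognizing that the dissipative semigroup $\e^{-\nu\int_s^t p}$ carries \emph{enough extra decay} to upgrade the source-time decay $\e^{-\frac1{24}\lambda_{\nu,\kappa}k^2s^3}$ of Proposition \ref{prop:enhancedQT} into present-time decay $\e^{-\frac1{24}\lambda_{\nu,\kappa}k^2t^3}$, uniformly in $s\in[0,t]$ and without losing a constant in the exponent. This is precisely where $\beta>1/2$ (so that $\mu\ge\lambda_{\nu,\kappa}/2$) and $\nu>\lambda_{\nu,\kappa}$ (so that $\mu<\nu$) enter, and it is also why the final constant is independent of $\nu$ and $\kappa$: the remaining coefficient integrals are controlled purely by the quadratic-in-time growth of $p(s)$, the exponential factors playing no further role.
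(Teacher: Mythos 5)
Your proposal is correct and follows essentially the same route as the paper: an energy/Duhamel estimate on the $U^1,U^3$ equations, insertion of the $\e^{-\frac{4\beta}{2\beta+1}\lambda_{\nu,\kappa}\int_0^s p}$ decay of $(\GG,\Gamma)$ from \eqref{eq:energyboundP}, the observation that $\nu>\frac{2\beta}{2\beta+1}\lambda_{\nu,\kappa}$ together with the monotonicity of $s\mapsto\int_0^s p$ transfers the decay to time $t$ (with $\frac{2\beta}{2\beta+1}\ge\frac12$ giving the $\frac{1}{24}$ in the exponent), and finally the time-integrability $\int_\RR p^{-a}\dd s$ producing the factor $6\,(\tfrac{k^2+l^2}{k^6})^{1/4}$. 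The only cosmetic difference is that you integrate the three forcing coefficients separately (with exponents $3/4$ and $7/4$), whereas the paper first collapses them pointwise into $(3+\beta)|k,l|^{1/2}p^{-3/4}$; both yield the stated constant $6C_\beta(3+\beta)$.
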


\begin{proof}
A standard energy estimate on \eqref{eq:U1}-\eqref{eq:U3} gives
\begin{align}
\ddt |U^{1}|+\nu p  |U^{1}| \leq 3|k,l|^{1/2}p^{-{3/4}}|\GG|  +\beta |k,l|^{1/2}    p^{-3/4}|\Gamma| ,\\
\ddt |U^{3}|+\nu p  |U^{3}|\leq  2|k,l|^{1/2} p^{-3/4} |\GG| +\beta |k,l|^{1/2}    p^{-3/4} |\Gamma|.
\end{align}
It is enough to discuss the inequality for $|U^1|$ since the one for $|U^3|$ is essentially the same.
Following the proof of Proposition \ref{prop:enhancedQT} and using \eqref{eq:energyboundP} lead to 
\begin{equation}
    |\GG(t)|^2+|\Gamma(t)|^2\leq C^2_\beta\e^{- \frac{4\beta}{2\beta+1} \lambda_{\nu,\kappa}  \int_0^tp(s)\dd s } \left[|\GG(0)|^2+|\Gamma(0)|^2\right],
\end{equation}
so that we end up with
\begin{align}
\ddt |U^{1}|+\nu p  |U^{1}| &\leq C_\beta(3+\beta)|k,l|^{1/2}p^{-{3/4}}  \e^{- \frac{2\beta}{2\beta+1 } \lambda_{\nu,\kappa}  \int_0^tp(s)\dd s }\left[|\GG(0)|^2+|\Gamma(0)|^2\right]^\frac12.
\end{align}
Via the Gronwall inequality, we deduce that
\begin{align*}
  |U^1|&\leq \e^{-\nu\int_0^t p(s)\dd s}\left[|U^1(0)|+\bar{C}_\beta\int_0^tp^{-3/4}(s)\e^{\left(\nu - \frac{2\beta}{2\beta+1 } \lambda_{\nu,\kappa}\right)\int_0^s p(\tau)\dd \tau}\dd s\left[|\GG(0)|^2+|\Gamma(0)|^2\right]^\frac12\right],
\end{align*}
where $\bar C _\beta =C_\beta(3+\beta)|k,l|^{1/2}$.
Note that the function $s\mapsto\int_0^sp(\tau)\dd\tau$ is increasing  and $\nu - \frac{2\beta}{2\beta+1 } \lambda_{\nu,\kappa}$ is always positive.
Therefore, the monotonicity of the exponential inside the integral, estimate \eqref{eq:lbintegrp}, and
\begin{equation}
\int_{\RR}p^{-3/4}\dd t=\int_{\RR} \frac{1}{(k^2+(\eta-kt)^2+l^2)^{3/4}}\dd t \leq 6 |k|^{-3/2},
\end{equation}
imply 
\begin{align}
 |U^{1}(t)| &\leq  \e^{-\frac{1}{24}\lambda_{\nu,\kappa} k^2t^3 } \left[ |U^{1}(0)| +6 C_\beta(3+\beta)\left(\frac{|k,l|^2}{k^6}\right)^{1/4}  \left[|\GG(0)|^2+|\Gamma(0)|^2\right]^\frac12\right].
\end{align}
This concludes the proof.
\end{proof}

To finish the proof of \eqref{eq:Linenhanced}, we again have to translate \eqref{eq:energenhancedu1u3} to the original variables. We do this only for $u^{1}$, as the estimate for $u^{3}$ is exactly
the same. Recalling \eqref{eq:SimVar}, we have
\begin{align}
\|u^{1}_{k,l}(t)\|_{L_y^2}^2
&\lesssim \e^{- \frac{1}{12 } \lambda_{\nu,\kappa}  t^3 }  \int_\mathbb{R}    |U^{1}_{k,l}(0)|^2 +\left(\frac{k^2+l^2}{k^6}\right)^{1/2}  \left[|\GG_{k,l}(0)|^2+|\Gamma_{k,l}(0)|^2\right]  \dd \eta\\
&\lesssim \e^{- \frac{1}{12 } \lambda_{\nu,\kappa}   t^3 } \int_\mathbb{R}\left[|u^{1}_{k,l}(0)|^2+   \frac{ |k,\eta,l|^3}{|k,l|  }  |u^{2}_{k,l}(0)|^2+|k,l|   |k,\eta,l||\theta_{k,l}(0)|^2\right]   \dd \eta\\
&\lesssim  \e^{- \frac{1}{12 } \lambda_{\nu,\kappa}t^3 }  \left[\| u^{1}(0)\|^2_{L^2} + \| u^{2}(0)\|^2_{H^{3/2}} +\| \theta(0)\|^2_{H^1} \right],
\end{align}
which is precisely included in \eqref{eq:Linenhanced}, after a summation over $(k,l)$, with $k\neq 0$. 

\section{Zero modes analysis}\label{sec:zerolin}
We now switch our attention to the study of the streak solutions, which satisfy \eqref{eq:k=0mode}. This analysis reveals 
one of the crucial differences compared to the homogeneous Navier-Stokes equations,
namely the absence of the lift-up effect. 
On the Fourier side, \eqref{eq:k=0mode} reads
\begin{equation}\label{eq:k=0modeFou}
\begin{cases}
\displaystyle\de_t u^{1}_0=-u^{2}_0-\nu |\eta,l|^2u^{1}_0,\\
\displaystyle\de_t u^{2}_0=-\beta\frac{l^2}{|\eta,l|^2}\theta_0-\nu |\eta,l|^2u^{2}_0, \\
\displaystyle\de_t u^{3}_0=\beta \frac{\eta l}{|\eta,l|^2} \theta_0-\nu |\eta,l|^2 u^{3}_0, \\
\displaystyle\de_t\theta_0=\beta u^{2}_0-\kappa |\eta,l|^2\theta_0.
\end{cases}
\end{equation}
The special case $l=0$ is the simplest: $u^{2}_{0,0}=0$ due to incompressibility, and the above system reduces to three decoupled
heat equations in $\RR$. When $l\neq0$, the main point is to understand the dynamics of the $(u^{2}_0,\theta_0)$ system, via the analogue of the symmetric variables
\eqref{eq:SimVar}, which in this case become
\begin{equation}\label{eq:SimVarZero}
 g_0:=\frac{|\eta,l|^{3/2}}{|l|^{1/2}}u^{2}_0, \qquad  \gamma_0:=|\eta,l|^{1/2}|l|^{1/2}\theta_0.
\end{equation} 
It is in fact convenient to also rescale the other variables as
\begin{equation}\label{eq:SimVarZero2}
 f_0:=-\frac{|\eta,l|^{3/2}}{|l|^{1/2}}u^{1}_0, \qquad  h_0:=\frac{|\eta,l|^{5/2}|l|^{1/2}}{\beta l }u^{3}_0.
\end{equation} 
In these new variables, system \eqref{eq:k=0modeFou} can now be written in the compact form
\begin{equation}
\ddt \begin{pmatrix}
f_0	\\
h_0 \\
g_0\\
\gamma_0
\end{pmatrix}
=
\begin{pmatrix}
N & S \\
0 & M
\end{pmatrix}
\begin{pmatrix}
f_0	\\
h_0 \\
g_0\\
\gamma_0
\end{pmatrix},
\end{equation} 
where  
\begin{equation}
N=-\nu|\eta,l|^2
\begin{pmatrix}
1 & 0\\
0 &1
\end{pmatrix},
\qquad
S=
\begin{pmatrix}
1 & 0\\
0 & \eta
\end{pmatrix},
\qquad
M=
\begin{pmatrix}
-\nu|\eta,l|^2 & -\beta \frac{|l|}{|\eta,l|}\\
\beta \frac{|l|}{|\eta,l|} & -\kappa|\eta,l|^2
\end{pmatrix}.
\end{equation}
The solution to this ODE can be explicitly computed as
\begin{equation}\label{eq:zeromodesolution}
\begin{pmatrix}
f_0(t)	\\
h_0(t) \\
g_0(t)\\
\gamma_0(t)
\end{pmatrix}
=
\exp\left[
\begin{pmatrix}
N & S \\
0 & M
\end{pmatrix}
t\right]
\begin{pmatrix}
f_0(0)	\\
h_0(0) \\
g_0(0)\\
\gamma_0(0)
\end{pmatrix}.
\end{equation}
Since $N$ and $M$ commute,  the exponential matrix has the   form
\begin{equation}\label{eq:bigexpt}
\exp\left[
\begin{pmatrix}
N & S \\
0 & M
\end{pmatrix}
t\right]
=
\begin{pmatrix}
\e^{Nt} & S (N-M)^{-1}\left(\e^{Nt}-\e^{Mt}\right)\\
0 & \e^{Mt}
\end{pmatrix}.
\end{equation}
The matrix $N-M$ is indeed invertible, since 
\begin{equation}
\det (N-M)
=
\beta^2 \frac{|l|^2}{|\eta,l|^2}\neq 0.
\end{equation}
The exponential matrix $\e^{Mt}$ has a quite complicate expression in terms of hyperbolic functions. Defining
\begin{equation}\label{eq:abc}
\sfa:=\frac{|\nu-\kappa||\eta,l|^2}{2}, \qquad \sfb:=\frac{\beta |l|}{|\eta,l|}, \qquad \sfc:= \sqrt{\sfa^2-\sfb^2},
\end{equation}
we can explicitly write 
\begin{equation}\label{eq:expMt}
  \e^{Mt}=
   \e^{-\frac{\nu+\kappa}{2} |\eta,l|^2 t} 
   \begin{pmatrix}
\cosh(\sfc t)- \frac{\nu-\kappa}{2\sfc}|\eta,l|^2\sinh(\sfc t)	& 	- \frac{\sfb}{\sfc}\sinh(\sfc t)\\
  \frac{\sfb}{\sfc}\sinh(\sfc t)&	\cosh(\sfc t)+ \frac{\nu-\kappa}{2\sfc}|\eta,l|^2\sinh(\sfc t)
    \end{pmatrix}.
\end{equation}
To better visualize the oscillating nature of the solution the reader can focus on the special case $\nu=\kappa$. With this restriction we have $\sfa=0$ and $\sfc=i\sfb$, so that 
the oscillatory nature of the terms in \eqref{eq:expMt} become apparent.
Continuing with the most general setting, we have
\begin{equation}\label{eq:solutionZero}
(N-M)^{-1}\left(\e^{Nt}-\e^{Mt}\right) =  \begin{pmatrix}
    \sfm_{11} &\sfm_{12} \\
    -\sfm_{12} & \sfm_{22} 
\end{pmatrix},
\end{equation}
where
\begin{align}
    \sfm_{11}&= \frac{\nu -\kappa}{\sfb^2}|\eta,l|^2 \left( \e^{-\frac{\nu+\kappa}{2}|\eta,l|^2 t} \phi_-(\sfc t)+\e^{-\nu |\eta,l|^2t }\right)
    +\frac{\e^{-\frac{\nu+\kappa}{2}|\eta,l|^2 t}}{\sfc} \sinh(\sfc t),\\
    \sfm_{12}&=\frac{1}{\sfb}\left(\e^{-\frac{\nu+\kappa}{2}|\eta,l|^2 t}\phi_-(\sfc t)-\e^{-\nu|\eta,l|^2 t} \right),\\
    \sfm_{22}&=\frac{1}{\sfc} \e^{-\frac{\nu+\kappa}{2} |\eta,l|^2t}\sinh(\sfc t),
\end{align}
and
\begin{align}
\phi_\pm(\sfc t)=\cosh(\sfc t)\pm \frac{\nu-\kappa}{2\sfc}|\eta,l|^2\sinh(\sfc t).
\end{align}
With this notation we have 
\begin{equation}
  \e^{Mt}=
   \e^{-\frac{\nu+\kappa}{2} |\eta,l|^2 t} 
   \begin{pmatrix}
\phi_-(\sfc t)	& 	- \frac{\sfb}{\sfc}\sinh(\sfc t)\\
  \frac{\sfb}{\sfc}\sinh(\sfc t)&	\phi_+(\sfc t)
    \end{pmatrix}.
\end{equation}
We now aim to show that the oscillations present in the terms above do prevent the lift-up effect and hence yield bounds that are independent of $\nu,\kappa$. More specifically, we need to establish uniform-in-time bounds on each entry of the matrix.

\subsection{Proof of Theorem \ref{thm:noliftup}}
To show uniform-in-time bounds on the entries of matrix \eqref{eq:bigexpt}, we start by noticing that 
\begin{equation}
    \nu +\kappa =2\min\{\nu,\kappa\}+|\nu-\kappa|.
\end{equation}
This allows us to extract the dissipation factor from the matrix \eqref{eq:bigexpt} and to recover the heat decay expected for times larger than $\min\{\nu,\kappa\}^{-1}$.
After collecting $\e^{-\min\{\nu,\kappa\}|\eta,l|^2t}$, the matrix looks like 
\begin{equation}
    \begin{pmatrix}
       \e^{-(\nu-\min\{\nu,\kappa\})|\eta,l|^2t} & 0 & \e^{\min\{\nu,\kappa\}|\eta,l|^2t}\sfm_{11} & \e^{\min\{\nu,\kappa\}|\eta,l|^2t}\sfm_{12}\\
       0 & \e^{-(\nu-\min\{\nu,\kappa\})|\eta,l|^2t}  & -\e^{\min\{\nu,\kappa\}|\eta,l|^2t}\eta \sfm_{12} & \e^{\min\{\nu,\kappa\}|\eta,l|^2t}\eta\sfm_{22}\\ 
       0 & 0 & \e^{-\sfa t}\phi_-(\sfc t)	& 	- \e^{-\sfa t}\frac{\sfb}{\sfc}\sinh(\sfc t) \\
       0 & 0 & \e^{-\sfa t} \frac{\sfb}{\sfc}\sinh(\sfc t)&	\e^{-\sfa t}\phi_+(\sfc t)
    \end{pmatrix}.
\end{equation}
To bound each entry, notice that 
\begin{equation}\label{eq:stimecoshsinh1}
    \e^{-\sfa t}\cosh(\sfc t) \leq 2, \qquad \e^{-\sfa t}\frac{\sinh(\sfc t)}{\sfc}\leq \frac{2}{\max\{\sfa,\sfb\}}
\end{equation}
hold for any $\sfa, \sfb>0$, with $\sfc$ as in \eqref{eq:abc}.
Hence, 
\begin{align}
    |\e^{\min\{\nu,\kappa\}|\eta,l|^2t}\sfm_{11}|&=\left|\frac{\nu -\kappa}{\sfb^2}|\eta,l|^2 \left( \e^{-\sfa t} \phi_-(\sfc t)+\e^{-(\nu-\min\{\nu,\kappa\}) |\eta,l|^2t }\right)
    +\frac{\e^{-\sfa t}}{\sfc} \sinh(\sfc t)\right|\lesssim \frac{\sfa}{\sfb^2} +\frac{1}{\sfb},\\
    |\e^{\min\{\nu,\kappa\}|\eta,l|^2t}\sfm_{12}|&=\left|\frac{1}{\sfb}\left(\e^{-\sfa t}\phi_-(\sfc t)-\e^{-(\nu-\min\{\nu,\kappa\}|\eta,l|^2 t} \right)\right|\lesssim \frac{1}{\sfb},\\
    |\e^{\min\{\nu,\kappa\}|\eta,l|^2t}\sfm_{22}|&=\left|\frac{1}{\sfc} \e^{-\sfa t}\sinh(\sfc t)\right|\lesssim\frac{1}{\sfb}.\label{eq:stimecoshsinh2}
\end{align}
while all the other entries are bounded by $1$.
Now, recall that the solution to the ODE \eqref{eq:k=0modeFou} can be written explicitly via \eqref{eq:zeromodesolution}. Component-wise it reads
\begin{align}
        f_0(t)&=\e^{-\nu |\eta,l|^2 t}f_0(0) + \sfm_{11}g_0(0) + \sfm_{12}\gamma_0(0),\\
        h_0(t)&=\e^{-\nu |\eta,l|^2 t}h_0(0) - \eta\sfm_{12}g_0(0)+\eta\sfm_{22}\gamma_0(0),\\
        g_0(t)&=\e^{-\frac{\nu+\kappa}{2}|\eta,l|^2 t}\phi_-(\sfc t)g_0(0) -\e^{-\frac{\nu+\kappa}{2}|\eta,l|^2 t}\frac{\sfb}{\sfc}\sinh(\sfc t)\gamma_0(0),\\
        \gamma_0(t)&=\e^{-\frac{\nu+\kappa}{2}|\eta,l|^2 t}\frac{\sfb}{\sfc}\sinh(\sfc t)g_0(0) +\e^{-\frac{\nu+\kappa}{2}|\eta,l|^2 t}\phi_+(\sfc t)\gamma_0(0).
\end{align}
Using the estimates \eqref{eq:stimecoshsinh1} - \eqref{eq:stimecoshsinh2} we have the following point-wise bounds for the solutions 
\begin{align*}   
    |f_0(t)|&\lesssim \e^{-\min\{\nu,\kappa\} |\eta,l|^2 t}\left[|f_0(0)| + \frac{|\eta,l|^4}{\beta |l|^2}|g_0(0)| + \frac{|\eta,l|}{\beta |l|}(|g_0(0)|+|\gamma_0(0)|)\right],\\
    |h_0(t)|&\lesssim \e^{-\min\{\nu,\kappa\} |\eta,l|^2 t}\left[|h_0(0)| +\frac{|\eta||\eta,l|}{\beta |l|}|g_0(0)| + \frac{|\eta||\eta,l|}{\beta |l|}|\gamma_0(0)|\right], \\
    |g_0(t)|&\lesssim \e^{-\min\{\nu,\kappa\} |\eta,l|^2 t}\left[|g_0(0)| +|\gamma_0(0)|\right],\\
    |\gamma_0(t)|&\lesssim \e^{-\min\{\nu,\kappa\} |\eta,l|^2 t}\left[|g_0(0)| +|\gamma_0(0)|\right].
\end{align*}
By \eqref{eq:SimVarZero} and \eqref{eq:SimVarZero2}, we can translate these bounds into the originals variables $u^{1}_0,u^{2}_0,u^{3}_0,\theta_0$:
\begin{align*}
    |u^{1}_0(t)|&\lesssim \e^{-\min\{\nu,\kappa\} |\eta,l|^2 t}\left[|u^{1}_0(0)| + \frac{|\eta,l|^4}{\beta |l|^2}|u^{2}_0(0)| + \frac{1}{\beta}(|u^{2}_0(0)|+|\theta_0(0)|)\right],\\
    |u^{3}_0(t)|&\lesssim \e^{-\min\{\nu,\kappa\} |\eta,l|^2 t}\left[|u^{3}_0(0)| +\frac{|\eta|}{|l|}|u^{2}_0(0)| +\frac{|\eta|}{|\eta,l|}|\theta_0(0)|\right], \\
    |u^{2}_0(t)|&\lesssim \e^{-\min\{\nu,\kappa\} |\eta,l|^2 t}\left[|u^{2}_0(0)| +\frac{|l|}{|\eta,l|}|\theta_0(0)|\right],\\
    |\theta_0(t)|&\lesssim \e^{-\min\{\nu,\kappa\} |\eta,l|^2 t}\left[\frac{|\eta,l|}{|l|}|u^{2}_0(0)| +|\theta_0(0)|\right].
\end{align*}
Theorem \ref{thm:noliftup} follows immediately.

%%%%%%%%%%%%%%%%%%%%%%%%%%%%%%%%

 \section*{Acknowledgments} 
The research of MCZ was partially supported by the Royal Society URF\textbackslash R1\textbackslash 191492 and EPSRC Horizon Europe Guarantee EP/X020886/1.

\bibliographystyle{abbrv}
\bibliography{CZDZ-Linear3DBoussinesq.bib}

\end{document}